\documentclass{amsart}
%	options include 12pt or 11pt or 10pt
%	classes include article, report, book, letter, thesis
\usepackage{setspace}
\usepackage{a4}
\usepackage{amsthm}
\usepackage{latexsym}
\usepackage{amsfonts}
\usepackage{graphicx}
\usepackage{textcomp}
\usepackage{cite}
\usepackage{enumerate}
\usepackage{amssymb}
\usepackage{hyperref}
\usepackage{amsmath}
\usepackage{tikz}
\usepackage[mathscr]{euscript}
\usepackage{mathtools}
\newtheorem{theorem}{Theorem}[section]

\newtheorem{corollary}[theorem] {Corollary}
\newtheorem{definition}[theorem]{Definition}

\newtheorem{question}[theorem]{Question}
\setlength{\parindent}{0pt} \setlength{\evensidemargin}{0.3cm}
\setlength{\oddsidemargin}{0.3cm} \setlength{\topmargin}{-1cm}
\textwidth 16cm \textheight 23cm
\onehalfspacing
\title{This is the title}
\raggedbottom

\usepackage{fancyhdr}

\pagestyle{fancy}
\fancyhead[LO]{\textbf{FUNCTIONAL KUPPINGER-DURISI-B\"{O}LCSKEI    UNCERTAINTY PRINCIPLE}}
\fancyhead[RE]{\textbf{K. MAHESH KRISHNA}}

\begin{document}
	\vspace{0.9cm}	
\hrule\hrule\hrule\hrule\hrule
\vspace{0.3cm}	
\begin{center}
{\bf{FUNCTIONAL KUPPINGER-DURISI-B\"{O}LCSKEI UNCERTAINTY PRINCIPLE}}\\
\vspace{0.3cm}
\hrule\hrule\hrule\hrule\hrule
\vspace{0.3cm}
\textbf{K. MAHESH KRISHNA}\\
Post Doctoral Fellow \\
Statistics and Mathematics Unit\\
Indian Statistical Institute, Bangalore Centre\\
Karnataka 560 059, India\\
Email: kmaheshak@gmail.com\\

Date: \today
\end{center}

\hrule\hrule
\vspace{0.5cm}
%--------------------------------------
\textbf{Abstract}: Let  $\mathcal{X}$ be a Banach space. Let $\{\tau_j\}_{j=1}^n, \{\omega_k\}_{k=1}^m\subseteq \mathcal{X}$ and $\{f_j\}_{j=1}^n$,  $\{g_k\}_{k=1}^m\subseteq \mathcal{X}^*$ satisfy $	|f_j(\tau_j)|\geq 1$ for all $ 1\leq j \leq n$, $|g_k(\omega_k)|\geq 1 $ for all $1\leq k \leq m$. If    $x \in \mathcal{X}\setminus \{0\}$ is such that $x=\theta_\tau\theta_f x=\theta_\omega\theta_g x$, then we show that 
\begin{align}\label{FKDB}
	\|\theta_fx\|_0\|\theta_gx\|_0\geq	\frac{\bigg[1-(\|\theta_fx\|_0-1)\max\limits_{1\leq j,r \leq n,j\neq r}|f_j(\tau_r)|\bigg]^+\bigg[1-(\|\theta_g x\|_0-1)\max\limits_{1\leq k,s \leq m,k\neq s}|g_k(\omega_s)|\bigg]^+}{\left(\displaystyle\max_{1\leq j \leq n, 1\leq k \leq m}|f_j(\omega_k)|\right)\left(\displaystyle\max_{1\leq j \leq n, 1\leq k \leq m}|g_k(\tau_j)|\right)}.
\end{align}	
We call Inequality (\ref{FKDB}) as \textbf{Functional Kuppinger-Durisi-B\"{o}lcskei Uncertainty Principle}. Inequality (\ref{FKDB}) improves the uncertainty principle obtained by Kuppinger, Durisi and B\"{o}lcskei \textit{[IEEE Trans. Inform. Theory  (2012)]} (which improved the Donoho-Stark-Elad-Bruckstein uncertainty principle \textit{[SIAM J. Appl. Math. (1989),  IEEE Trans. Inform. Theory (2002)]}). We also derive functional form of the uncertainity principle obtained by  Studer, Kuppinger, Pope and B\"{o}lcskei \textit{[EEE Trans. Inform. Theory (2012)]}.

\textbf{Keywords}:   Uncertainty Principle, Hilbert space,  Banach space.

\textbf{Mathematics Subject Classification (2020)}: 46A45, 46B45, 42C15.\\

\hrule

\tableofcontents
\hrule
\section{Introduction}
 Let $\hat{}: \mathbb{C}^d \to  \mathbb{C}^d$ be the Fourier transform. For $h \in \mathbb{C}^d$, let $\|h\|_0$ be the number of nonzero entries in $h$. It is correct to say that the progress of today's world is not possible without the following result of Donoho and Stark \cite{DONOHOSTARK}.
\begin{theorem} \cite{DONOHOSTARK}  (\textbf{Donoho-Stark Uncertainty Principle})  \label{DS}
	For every $d\in \mathbb{N}$, 
	\begin{align}\label{DSE}
		\left(\frac{\|h\|_0+\|\widehat{h}\|_0}{2}\right)^2	\geq \|h\|_0\|\widehat{h}\|_0	\geq d, \quad \forall h \in \mathbb{C}^d\setminus \{0\}.
	\end{align}
\end{theorem}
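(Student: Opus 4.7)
The statement consists of two inequalities, and I would treat them as separate tasks. The left-hand inequality $\bigl(\tfrac{\|h\|_0+\|\widehat h\|_0}{2}\bigr)^2\ge \|h\|_0\|\widehat h\|_0$ is just the AM--GM inequality applied to the two nonnegative integers $\|h\|_0$ and $\|\widehat h\|_0$, so the only real content is the right-hand bound $\|h\|_0\|\widehat h\|_0\ge d$.

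For the main inequality, my plan is the classical Donoho--Stark argument via interpolation between $\ell^1$, $\ell^2$ and $\ell^\infty$ on the supports. Let $T=\operatorname{supp}(h)$ and $S=\operatorname{supp}(\widehat h)$, so $|T|=\|h\|_0$ and $|S|=\|\widehat h\|_0$. First I would use the Fourier inversion formula $h(j)=\tfrac{1}{d}\sum_{k=0}^{d-1}\widehat h(k)\,e^{2\pi i jk/d}$ to obtain the pointwise bound $\|h\|_\infty\le \tfrac{1}{d}\|\widehat h\|_1$. Next, since $\widehat h$ is supported on $S$, Cauchy--Schwarz gives $\|\widehat h\|_1\le \sqrt{|S|}\,\|\widehat h\|_2$. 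Invoking Plancherel in the form $\|\widehat h\|_2^2=d\,\|h\|_2^2$, these combine to
\begin{equation*}
\|h\|_\infty\;\le\;\sqrt{\tfrac{|S|}{d}}\;\|h\|_2.
\end{equation*}
On the other hand, since $h$ is supported on $T$, trivially $\|h\|_2^2\le |T|\,\|h\|_\infty^2$. Substituting the previous estimate yields $\|h\|_2^2\le \tfrac{|T||S|}{d}\|h\|_2^2$, and dividing by the nonzero quantity $\|h\|_2^2$ leaves $1\le \tfrac{|T||S|}{d}$, i.e.\ $\|h\|_0\|\widehat h\|_0\ge d$.

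There is essentially no real obstacle here; the only thing to take care of is the Fourier normalization. Depending on whether $\widehat{\,}$ is taken as the unitary DFT (with a $1/\sqrt d$ in front) or the unnormalized DFT (with no prefactor) or the ``probabilist'' DFT (with $1/d$ in front), the constants in the Fourier inversion formula and in Plancherel shift around, but the quantities $\|h\|_0$ and $\|\widehat h\|_0$ are invariant under any nonzero scaling of $\widehat h$, so the final inequality $\|h\|_0\|\widehat h\|_0\ge d$ is unchanged. I would therefore fix the unnormalized convention at the start of the proof, record Plancherel and the inversion formula in that convention, and then execute the four-line chain $\|h\|_2^2\le|T|\|h\|_\infty^2\le \tfrac{|T|}{d^2}\|\widehat h\|_1^2\le \tfrac{|T||S|}{d^2}\|\widehat h\|_2^2=\tfrac{|T||S|}{d}\|h\|_2^2$ as above.
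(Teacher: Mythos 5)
The paper states this theorem only as a cited background result from Donoho--Stark and gives no proof of its own, so there is nothing internal to compare against. Your argument is correct and is essentially the standard proof from the original Donoho--Stark paper: AM--GM disposes of the left inequality, and the chain $\|h\|_2^2\le |T|\,\|h\|_\infty^2\le \tfrac{|T|}{d^2}\|\widehat h\|_1^2\le \tfrac{|T||S|}{d^2}\|\widehat h\|_2^2=\tfrac{|T||S|}{d}\|h\|_2^2$ is valid with the unnormalized DFT convention you fix, and your remark that $\|h\|_0$ and $\|\widehat h\|_0$ are insensitive to the normalization correctly handles the only point where care is needed.
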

 Given a collection $\{\tau_j\}_{j=1}^n$ in a finite dimensional Hilbert space $\mathcal{H}$ over $\mathbb{K}$ ($\mathbb{R}$ or $\mathbb{C}$), we define 
 
\begin{align*}
	\theta_\tau: \mathcal{H} \ni h \mapsto \theta_\tau h \coloneqq (\langle h, \tau_j\rangle)_{j=1}^n \in \mathbb{K} ^n.
\end{align*}
Elad and Bruckstein generalized Inequality (\ref{DSE})  to pairs of orthonormal bases \cite{ELADBRUCKSTEIN}. 
\begin{theorem} \cite{ELADBRUCKSTEIN} (\textbf{Elad-Bruckstein Uncertainty Principle})  \label{EB}
	Let $\{\tau_j\}_{j=1}^n$,  $\{\omega_j\}_{j=1}^n$ be two orthonormal bases for a  finite dimensional Hilbert space $\mathcal{H}$. Then 
	\begin{align*}
		\left(\frac{\|\theta_\tau h\|_0+\|\theta_\omega h\|_0}{2}\right)^2	\geq \|\theta_\tau h\|_0\|\theta_\omega h\|_0\geq \frac{1}{\displaystyle\max_{1\leq j, k \leq n}|\langle\tau_j, \omega_k \rangle|^2}, \quad \forall h \in \mathcal{H}\setminus \{0\}.
	\end{align*}
\end{theorem}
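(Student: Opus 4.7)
The plan is to combine a Cauchy-Schwarz bound for the change-of-basis coefficients with Parseval's identity in both bases, and close with the AM-GM inequality. Write $S_\tau := \{j : \langle h, \tau_j\rangle \neq 0\}$ and $S_\omega := \{k : \langle h, \omega_k\rangle \neq 0\}$, so that $|S_\tau| = \|\theta_\tau h\|_0$ and $|S_\omega| = \|\theta_\omega h\|_0$, and set $M := \max_{1 \leq j, k \leq n}|\langle \tau_j, \omega_k\rangle|$. Expanding $h$ in the $\tau$-basis as $h = \sum_{j \in S_\tau} \langle h, \tau_j\rangle \tau_j$, I would take the inner product against $\omega_k$ for each $k \in S_\omega$ to get $\langle h, \omega_k\rangle = \sum_{j \in S_\tau} \langle h, \tau_j\rangle \langle \tau_j, \omega_k\rangle$.

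Next I would apply the Cauchy-Schwarz inequality to this sum (which runs over only $|S_\tau|$ indices, not all $n$), yielding $|\langle h, \omega_k\rangle|^2 \leq |S_\tau| \sum_{j \in S_\tau} |\langle h, \tau_j\rangle|^2 |\langle \tau_j, \omega_k\rangle|^2 \leq M^2 |S_\tau| \|h\|^2$, where the last step uses Parseval in the $\tau$-basis. Summing over $k \in S_\omega$ and again invoking Parseval, this time in the $\omega$-basis, on the left hand side gives $\|h\|^2 = \sum_{k \in S_\omega} |\langle h, \omega_k\rangle|^2 \leq M^2 |S_\tau||S_\omega| \|h\|^2$. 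Dividing by $\|h\|^2 > 0$ produces the geometric-mean inequality $\|\theta_\tau h\|_0 \|\theta_\omega h\|_0 \geq 1/M^2$, and the leftmost arithmetic-geometric bound is then immediate from $\left(\frac{a+b}{2}\right)^2 \geq ab$ for $a, b > 0$.

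Structurally this is the classical Donoho-Stark proof lifted from the Fourier pair to an arbitrary pair of orthonormal bases, with the quantity $M$ replacing the sharp Fourier value $1/\sqrt{d}$. The one subtle point, which I expect to be the only place demanding care, is that Cauchy-Schwarz must be applied on the restricted index set $S_\tau$ rather than over all $n$ indices; otherwise $n$ (rather than $\|\theta_\tau h\|_0$) would appear as the multiplicative factor and the inequality would collapse to something useless. The hypothesis $h \neq 0$ is used implicitly to guarantee that $S_\tau$ and $S_\omega$ are nonempty and that the final division by $\|h\|^2$ is legitimate.
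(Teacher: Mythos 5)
Your argument is correct: the expansion of $h$ in the $\tau$-basis, the Cauchy--Schwarz step over the restricted index set $S_\tau$, Parseval in both bases, and the final AM--GM step all go through, and you are right that restricting Cauchy--Schwarz to $S_\tau$ is the one point where care is needed. Note, however, that the paper does not prove Theorem \ref{EB} at all; it is quoted from \cite{ELADBRUCKSTEIN} as background, so there is no in-paper proof to compare against. It is still worth contrasting your method with the techniques the paper actually uses for its own results (Theorems \ref{FKDBB} and \ref{FSKPB}): your proof is intrinsically an $\ell^2$ argument, leaning on Parseval and the inner product, which is why it is confined to orthonormal bases in Hilbert spaces and why the denominator carries the square $\max|\langle\tau_j,\omega_k\rangle|^2$. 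The paper instead runs a purely $\ell^1$ triangle-inequality argument on the coefficients $f_j(x)$, which sacrifices nothing essential in the orthonormal case (where it recovers Theorem \ref{EB} as a special case of Theorem \ref{KDB}) but survives the passage to non-orthogonal unit-norm systems and to Banach spaces, at the price of splitting the squared denominator into the product of two possibly different maxima $\bigl(\max|f_j(\omega_k)|\bigr)\bigl(\max|g_k(\tau_j)|\bigr)$. So your proof is the right one for the statement as given, but it does not generalize in the direction the paper pursues.
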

For $a \in \mathbb{R}$, set  $a^+\coloneqq \max\{0,a\}$. Kuppinger, Durisi and B\"{o}lcskei showed that Theorem \ref{EB} can be improved to  unit norm vectors \cite{KUPPINGERDURISIBOLCSKEI}.
\begin{theorem}\cite{KUPPINGERDURISIBOLCSKEI} (\textbf{Kuppinger-Durisi-B\"{o}lcskei Uncertainty Principle})\label{KDB}
Let $\{\tau_j\}_{j=1}^n$,  $\{\omega_k\}_{k=1}^m$ be two collections of unit vectors  in a   finite dimensional Hilbert space $\mathcal{H}$. If  $h \in \mathcal{H}\setminus \{0\}$ is such that 
\begin{align}\label{VAL}
	h=\theta_\tau^*\theta_\tau h=\theta_\omega^*\theta_\omega h,
\end{align}
then 
\begin{align*}
\|\theta_\tau h\|_0\|\theta_\omega h\|_0\geq	\frac{\bigg[1-(\|\theta_\tau h\|_0-1)\max\limits_{1\leq j,r \leq n,j\neq r}|\langle\tau_j, \tau_r \rangle|\bigg]^+\bigg[1-(\|\theta_\omega h\|_0-1)\max\limits_{1\leq k,s \leq m,k\neq s}|\langle\omega_k, \omega_s \rangle|\bigg]^+}{\displaystyle\max_{1\leq j \leq n, 1\leq k \leq m}|\langle\tau_j, \omega_k \rangle|^2}.
\end{align*}
\end{theorem}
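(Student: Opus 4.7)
The plan is to reduce everything to the master linear dependence
\[
\sum_{r \in S_\tau} c_r \tau_r \;-\; \sum_{k \in S_\omega} d_k \omega_k \;=\; 0,
\]
where $c_r := \langle h, \tau_r\rangle$, $d_k := \langle h, \omega_k\rangle$, and $S_\tau$, $S_\omega$ are their supports (so $|S_\tau| = \|\theta_\tau h\|_0$ and $|S_\omega| = \|\theta_\omega h\|_0$). This identity is what the twin hypotheses $h = \theta_\tau^*\theta_\tau h = \theta_\omega^*\theta_\omega h$ unpack to, once one recognises that $\theta_\tau^*\theta_\tau h = \sum_j c_j \tau_j$ and similarly for $\omega$.

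Next, pick $j^* \in S_\tau$ and $k^* \in S_\omega$ realising $|c_{j^*}| = \max_{r \in S_\tau}|c_r|$ and $|d_{k^*}| = \max_{k \in S_\omega}|d_k|$; both are strictly positive because $h \ne 0$. I test the master identity against $\tau_{j^*}$: the $\tau$-sum splits as $c_{j^*}\|\tau_{j^*}\|^2 = c_{j^*}$ plus $|S_\tau| - 1$ off-diagonal terms, each bounded by $\max_{j \ne r}|\langle \tau_j, \tau_r\rangle| \cdot |c_r|$, while the $\omega$-sum is bounded via the mutual coherence $\max_{j,k}|\langle \tau_j, \omega_k\rangle|$ together with the estimate $\sum_k |d_k| \le \|\theta_\omega h\|_0 \cdot |d_{k^*}|$. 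Triangle-inequality bookkeeping then yields
\[
|c_{j^*}|\Bigl[1 - (\|\theta_\tau h\|_0 - 1)\max_{j \ne r}|\langle \tau_j, \tau_r\rangle|\Bigr] \;\le\; \|\theta_\omega h\|_0 \cdot \max_{j,k}|\langle \tau_j, \omega_k\rangle| \cdot |d_{k^*}|.
\]
The entirely symmetric computation---testing the master identity against $\omega_{k^*}$ and using $\|\omega_{k^*}\| = 1$---gives
\[
|d_{k^*}|\Bigl[1 - (\|\theta_\omega h\|_0 - 1)\max_{k \ne s}|\langle \omega_k, \omega_s\rangle|\Bigr] \;\le\; \|\theta_\tau h\|_0 \cdot \max_{j,k}|\langle \tau_j, \omega_k\rangle| \cdot |c_{j^*}|.
\]

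The final step is to multiply these two inequalities and cancel the common factor $|c_{j^*}||d_{k^*}| > 0$. The only subtle point---and the one I expect to be the main obstacle in writing the argument cleanly---is that multiplying inequalities is legitimate only when both sides are non-negative. If both bracketed self-coherence factors on the left are $\ge 0$, direct multiplication delivers the stated bound verbatim, without any $[\,\cdot\,]^+$ truncation. If either bracket is negative, then the corresponding $[\,\cdot\,]^+$ in the theorem's numerator vanishes, so the right-hand side of the claimed inequality equals $0$ and the bound holds trivially. This dichotomy is exactly what the $[\,\cdot\,]^+$ notation encodes, and a short case analysis completes the proof.
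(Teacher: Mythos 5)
Your argument is correct, and it reaches the stated bound by a route that differs from the paper's in one meaningful respect. The paper (which proves the Banach-space generalization, Theorem \ref{FKDBB}, and recovers the Hilbert-space statement by setting $f_j=\langle\cdot,\tau_j\rangle$, $g_k=\langle\cdot,\omega_k\rangle$) establishes the same per-index inequality you do --- lower-bound $|\langle h,\tau_j\rangle|$ via the diagonal term minus the off-diagonal self-coherence contribution, upper-bound it via the cross-coherence --- but then \emph{sums that inequality over the whole support} of $\theta_\tau h$, producing a relation between $\|\theta_\tau h\|_1$, $\|\theta_\omega h\|_1$ and $\|\theta_\tau h\|_0$, and finally cancels the product $\|\theta_\tau h\|_1\|\theta_\omega h\|_1$. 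You instead evaluate only at the maximizing indices $j^*,k^*$ and aggregate with the crude bounds $\sum_{r\ne j^*}|c_r|\le(\|\theta_\tau h\|_0-1)|c_{j^*}|$ and $\sum_k|d_k|\le\|\theta_\omega h\|_0|d_{k^*}|$, cancelling $|c_{j^*}||d_{k^*}|>0$ at the end (positivity is guaranteed since $h=\theta_\tau^*\theta_\tau h\ne 0$ forces $\theta_\tau h\ne 0$, which you use implicitly and should state). Both aggregations are valid and yield identical constants here; your max-coefficient version is slightly leaner, while the paper's $\ell^1$-summation is the one that extends directly to the $\varepsilon$-concentrated refinement (Theorems \ref{SKPB} and \ref{FSKPB}), since concentration is defined relative to the $1$-norm and the support sum can simply be replaced by a sum over $M$. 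Your handling of the $[\,\cdot\,]^+$ truncation via the dichotomy (both brackets nonnegative versus at least one negative, in which case the right-hand side is zero) is exactly the point the paper also has to address, and your case analysis is sound.
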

Let $0\leq \varepsilon <1$. Recall that \cite{DONOHOSTARK} a vector  $(a_j)_{j=1}^n \in \mathbb{K}^n$ is said to be \textbf{$\varepsilon$-concentrated on a subset $M\subseteq \{1, \dots, n\}$ w.r.t. 1-norm}  if 
\begin{align*}
\sum_{j \in M}|a_j|\geq (1-\varepsilon) \sum_{j=1}^{n}|a_j| \iff \varepsilon \sum_{j=1}^{n}|a_j|\geq \sum_{j \in M^c}|a_j|.
\end{align*}
 Theorem \ref{KDB} has been improved by  Studer, Kuppinger, Pope and B\"{o}lcskei \cite{STUDERKUPPINNGERPOPEBOLCSKEI}. In the following theorem and in rest of the paper, given a subset $M\subseteq \mathbb{N}$, the number of elements in $M$ is denoted by $o(M)$.
\begin{theorem}\cite{STUDERKUPPINNGERPOPEBOLCSKEI} (\textbf{Studer-Kuppinger-Pope-B\"{o}lcskei Uncertainty Principle})\label{SKPB}
Let $\{\tau_j\}_{j=1}^n$,  $\{\omega_k\}_{k=1}^m$ be two collections of unit vectors  in a   finite dimensional Hilbert space $\mathcal{H}$. Let  $h \in \mathcal{H}\setminus \{0\}$ be such that 
\begin{align*}
	h=\theta_\tau^*\theta_\tau h=\theta_\omega^*\theta_\omega h.
\end{align*}
If $\theta_\tau h$ is $\varepsilon$-concentrated on a subset $M\subseteq \{1, \dots, n\}$ w.r.t. 1-norm  and  $\theta_\omega h$ is $\delta$-concentrated on a subset $N\subseteq \{1, \dots, n\}$ w.r.t. 1-norm, then 
\begin{align*}
	o(M)o(N)\geq	\frac{\bigg[1-\varepsilon-(o(M)-1+\varepsilon)\max\limits_{1\leq j,r \leq n,j\neq r}|\langle\tau_j, \tau_r \rangle|\bigg]^+\bigg[1-\delta-(o(N)-1+\delta)\max\limits_{1\leq k,s \leq m,k\neq s}|\langle\omega_k, \omega_s \rangle|\bigg]^+}{\displaystyle\max_{1\leq j \leq n, 1\leq k \leq m}|\langle\tau_j, \omega_k \rangle|^2}.
\end{align*}	
\end{theorem}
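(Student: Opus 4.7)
The plan is to adapt the argument underlying Theorem~\ref{KDB} so that the $\varepsilon$- and $\delta$-concentration hypotheses enter only through the simple lower bound $(1-\varepsilon)\|x\|_1 \leq \sum_{\ell\in M}|x_\ell|$ (and its symmetric analogue). I would first set $x:=\theta_\tau h\in\mathbb{K}^n$ and $y:=\theta_\omega h\in\mathbb{K}^m$, noting $\|x\|_1,\|y\|_1>0$ since $h\neq 0$, and unpack $h=\theta_\tau^*\theta_\tau h=\theta_\omega^*\theta_\omega h$ via $h=\sum_j x_j\tau_j=\sum_k y_k\omega_k$ into the two identity families
\begin{align*}
x_\ell=\sum_{j=1}^{n}x_j\langle\tau_j,\tau_\ell\rangle\qquad\text{and}\qquad x_\ell=\sum_{k=1}^{m}y_k\langle\omega_k,\tau_\ell\rangle,
\end{align*}
obtained by pairing each representation of $h$ against $\tau_\ell$. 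Writing $M_\tau:=\max_{j\neq r}|\langle\tau_j,\tau_r\rangle|$, $M_\omega:=\max_{k\neq s}|\langle\omega_k,\omega_s\rangle|$ and $M_{\tau\omega}:=\max_{j,k}|\langle\tau_j,\omega_k\rangle|$, the first identity (using $\|\tau_\ell\|^2=1$) forces $\sum_{j\neq \ell}x_j\langle\tau_j,\tau_\ell\rangle=0$, so adding this vanishing sum to the second identity gives
\begin{align*}
x_\ell=\sum_{k=1}^{m}y_k\langle\omega_k,\tau_\ell\rangle-\sum_{j\neq\ell}x_j\langle\tau_j,\tau_\ell\rangle,
\end{align*}
and the triangle inequality combined with the coherence bounds should produce the key estimate
\begin{align*}
(1+M_\tau)|x_\ell|\leq M_\tau\|x\|_1+M_{\tau\omega}\|y\|_1\qquad(1\leq\ell\leq n).
\end{align*}

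Next I would specialize this to any $\ell^*\in M$ maximizing $|x_\ell|$ over $M$. The concentration hypothesis yields $(1-\varepsilon)\|x\|_1\leq\sum_{\ell\in M}|x_\ell|\leq o(M)\,|x_{\ell^*}|$, so substituting $|x_{\ell^*}|\geq(1-\varepsilon)\|x\|_1/o(M)$ into the key estimate and simplifying the factor $(1+M_\tau)(1-\varepsilon)-o(M)M_\tau$ should give
\begin{align*}
\bigl[\,1-\varepsilon-(o(M)-1+\varepsilon)M_\tau\,\bigr]\|x\|_1\leq o(M)\,M_{\tau\omega}\,\|y\|_1.
\end{align*}
The symmetric argument exchanging $\tau\leftrightarrow\omega$, $M\leftrightarrow N$, $\varepsilon\leftrightarrow\delta$ would deliver the companion inequality
\begin{align*}
\bigl[\,1-\delta-(o(N)-1+\delta)M_\omega\,\bigr]\|y\|_1\leq o(N)\,M_{\tau\omega}\,\|x\|_1.
\end{align*}
Multiplying the two, dividing by the strictly positive $\|x\|_1\|y\|_1$, and observing that the right-hand side is nonnegative so that replacing each left bracket by its positive part $[\,\cdot\,]^+$ only strengthens the inequality, should recover the advertised bound.

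The step I expect to be the main obstacle is the very first one: recognizing that the trivially-zero sum $\sum_{j\neq\ell}x_j\langle\tau_j,\tau_\ell\rangle$ must be \emph{bounded} by the loose estimate $M_\tau(\|x\|_1-|x_\ell|)$ before being combined with the cross-identity, rather than substituted by its exact value $0$. Using the value $0$ directly merely recovers an Elad-Bruckstein-type bound in which the concentrations produce a factor $(1-\varepsilon)(1-\delta)$ but the self-coherences $M_\tau,M_\omega$ never appear. The loose coherence estimate, by contrast, injects $M_\tau$ into the inequality in exactly the form needed for the $\varepsilon$-leakage to combine with $(o(M)-1)M_\tau$ into the sharper numerator bracket $1-\varepsilon-(o(M)-1+\varepsilon)M_\tau$; making that trade-off work uniformly in $\ell$ before picking the argmax on $M$ is the technical heart of the proof.
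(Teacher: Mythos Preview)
Your argument is correct and follows essentially the same route as the paper: derive the pointwise bound $(1+M_\tau)|x_\ell|\leq M_\tau\|x\|_1+M_{\tau\omega}\|y\|_1$ from the two representations of $h$, combine it with the concentration hypothesis to obtain the one-sided inequality, then multiply with its symmetric partner and cancel $\|x\|_1\|y\|_1$. The only cosmetic difference is that the paper sums the pointwise bound over all $\ell\in M$ and then invokes $\sum_{\ell\in M}|x_\ell|\geq(1-\varepsilon)\|x\|_1$, whereas you apply it at the single argmax index $\ell^*\in M$ and use $o(M)\,|x_{\ell^*}|\geq(1-\varepsilon)\|x\|_1$; both choices yield the identical intermediate inequality.
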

When $\varepsilon=0$, Theorem \ref{SKPB} reduces to Theorem \ref{KDB}.
In this paper, we derive both finite and infinite dimensional Banach space versions of Theorems \ref{KDB} and \ref{SKPB}. It is reasonable to note that Theorem \ref{EB} has been improved using Parseval frames for Hilbert spaces by 
Ricaud and Torr\'{e}sani \cite{RICAUDTORRESANI} and later extended to Banach spaces in the paper \cite{KRISHNA}. Most important thing to keep in mind is that uncertainty principle derived in \cite{RICAUDTORRESANI} is for Parseval frames (which says vectors have norm less than or equal to one) which is not required in Theorem \ref{KDB} (but with the condition that vectors are unit vectors). Also note that it is not required the validity of Equation (\ref{VAL}) for all $h\in \mathcal{H}$ (in that case, both will become orthonormal bases).

\section{Functional Kuppinger-Durisi-B\"{o}lcskei   Uncertainty Principle}
In the paper,   $\mathbb{K}$ denotes $\mathbb{C}$ or $\mathbb{R}$ and $\mathcal{X}$ denotes a   Banach space  over $\mathbb{K}$. Dual of $\mathcal{X}$ is denoted by $\mathcal{X}^*$. Given a collection $\{\tau_j\}_{j=1}^n$ in  $\mathcal{X}$ and a collection $\{f_j\}_{j=1}^n$ in  $\mathcal{X}^*$   we define 
\begin{align*}
	&\theta_f: \mathcal{X} \ni x \mapsto \theta_fx \coloneqq (f_j(x))_{j=1}^n \in \mathbb{K} ^n,\\
	&\theta_\tau:\mathbb{K} ^n \ni (a_j)_{j=1}^n \mapsto \sum_{j=1}^{n}a_j\tau_j \in \mathcal{X}.
\end{align*}
Following is the Banach space generalization of Theorem \ref{KDB}.
\begin{theorem}
(\textbf{Functional Kuppinger-Durisi-B\"{o}lcskei Uncertainty Principle}) \label{FKDBB}
Let $\{\tau_j\}_{j=1}^n$,  $\{\omega_k\}_{k=1}^m$ be two  collections in a   finite dimensional Banach  space $\mathcal{X}$ and $\{f_j\}_{j=1}^n$,  $\{g_k\}_{k=1}^m$ be two  collections in  $\mathcal{X}^*$ satisfying
\begin{align*}
	|f_j(\tau_j)|\geq 1,~ \forall 1\leq j \leq n, \quad |g_k(\omega_k)|\geq 1,~ \forall 1\leq k \leq m.
\end{align*}
 If  $x \in \mathcal{X}\setminus \{0\}$ is such that 
\begin{align}\label{ASSU}
	x=\theta_\tau\theta_f x=\theta_\omega\theta_g x,
\end{align}
then 
\begin{align*}
	\|\theta_fx\|_0\|\theta_gx\|_0\geq	\frac{\bigg[1-(\|\theta_fx\|_0-1)\max\limits_{1\leq j,r \leq n,j\neq r}|f_j(\tau_r)|\bigg]^+\bigg[1-(\|\theta_g x\|_0-1)\max\limits_{1\leq k,s \leq m,k\neq s}|g_k(\omega_s)|\bigg]^+}{\left(\displaystyle\max_{1\leq j \leq n, 1\leq k \leq m}|f_j(\omega_k)|\right)\left(\displaystyle\max_{1\leq j \leq n, 1\leq k \leq m}|g_k(\tau_j)|\right)}.
\end{align*}	
\end{theorem}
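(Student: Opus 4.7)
The plan is to mirror the Hilbert-space proof of Theorem \ref{KDB}, with $|f_j(\tau_j)|\geq 1$ and $|g_k(\omega_k)|\geq 1$ replacing the unit-norm hypothesis. Set $c_j \coloneqq f_j(x)$, $d_k \coloneqq g_k(x)$, $M \coloneqq \{j : c_j\neq 0\}$, and $N \coloneqq \{k : d_k\neq 0\}$, so that $|M|=\|\theta_f x\|_0$ and $|N|=\|\theta_g x\|_0$. Hypothesis \eqref{ASSU} then translates into the two expansions $x = \sum_{j\in M}c_j\tau_j = \sum_{k\in N}d_k\omega_k$. Since $x\neq 0$, the numbers $c^*\coloneqq \max_{j\in M}|c_j|$ and $d^*\coloneqq \max_{k\in N}|d_k|$ are strictly positive; let $r^*\in M$ and $s^*\in N$ achieve them.

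The central move is to apply $g_{s^*}$ to both expansions of $x$ and combine. Using $x=\sum_k d_k\omega_k$ one isolates $d_{s^*}g_{s^*}(\omega_{s^*}) = d_{s^*} - \sum_{k\neq s^*} d_k g_{s^*}(\omega_k)$, and then substituting $d_{s^*}=g_{s^*}(x)=\sum_{j\in M}c_j g_{s^*}(\tau_j)$ from the other expansion gives the identity
\begin{equation*}
d_{s^*}g_{s^*}(\omega_{s^*}) \;=\; \sum_{j\in M} c_j g_{s^*}(\tau_j) \;-\; \sum_{k\in N,\, k\neq s^*} d_k g_{s^*}(\omega_k).
\end{equation*}
Taking absolute values, bounding the left side below by $d^*$ via $|g_{s^*}(\omega_{s^*})|\geq 1$, and estimating the right side by the triangle inequality together with the coherence bounds, I arrive after rearrangement at
\begin{equation*}
d^*\Bigl[1-(|N|-1)\max_{k\neq s}|g_k(\omega_s)|\Bigr] \;\leq\; \Bigl(\max_{j,k}|g_k(\tau_j)|\Bigr)\,|M|\,c^*.
\end{equation*}
The symmetric argument with $f_{r^*}$ in place of $g_{s^*}$ yields
\begin{equation*}
c^*\Bigl[1-(|M|-1)\max_{j\neq r}|f_j(\tau_r)|\Bigr] \;\leq\; \Bigl(\max_{j,k}|f_j(\omega_k)|\Bigr)\,|N|\,d^*.
\end{equation*}

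To finish, I multiply the two inequalities. If both bracket factors are non-negative, dividing by $c^* d^* > 0$ produces exactly the claim; if either factor is negative, the corresponding $[\cdot]^+$ in \eqref{FKDB} collapses the right-hand side to $0$ and the conclusion is trivial. The main obstacle, in my view, is spotting the key identity in the central step: one has to apply $g_{s^*}$ to \emph{both} expansions of $x$ so that the ``diagonal'' term $d_{s^*}g_{s^*}(\omega_{s^*})$ can be bounded below by $d^*$ via $|g_{s^*}(\omega_{s^*})|\geq 1$, while the right-hand side simultaneously exposes the cross-coherence $\max|g_k(\tau_j)|$ (from the $\tau$-expansion) and the self-coherence $\max_{k\neq s}|g_k(\omega_s)|$ (from the truncated $\omega$-expansion). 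Once this identity is isolated, everything else reduces to the triangle inequality and algebra.
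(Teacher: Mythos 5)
Your argument is correct, but it is not the route the paper takes. You work with the largest coefficients $c^*=\max_{j\in M}|f_j(x)|$ and $d^*=\max_{k\in N}|g_k(x)|$, test only against the two functionals $f_{r^*}$ and $g_{s^*}$ attaining these maxima, and cancel $c^*d^*$ at the end; the paper instead derives, for \emph{every} $j$, the pointwise inequality
\begin{equation*}
\left(1+\mu_f\right)|f_j(x)| - \|\theta_f x\|_1\,\mu_f \;\le\; \|\theta_g x\|_1\max_{1\le j\le n,\,1\le k\le m}|f_j(\omega_k)|, \qquad \mu_f \coloneqq \max_{1\le j,r\le n,\, j\neq r}|f_j(\tau_r)|,
\end{equation*}
sums it over the support of $\theta_f x$ to obtain $\bigl[1-(\|\theta_f x\|_0-1)\mu_f\bigr]^+\|\theta_f x\|_1 \le \|\theta_g x\|_1\bigl(\max_{j,k}|f_j(\omega_k)|\bigr)\|\theta_f x\|_0$, does the same for $g$, multiplies, and cancels $\|\theta_f x\|_1\|\theta_g x\|_1$. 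The two proofs are therefore structurally parallel but run on different norms of the coefficient sequences: yours on $\ell^\infty$, the paper's on $\ell^1$. Your version is slightly leaner for this particular statement (two functional evaluations rather than a sum over the whole support, and the hypothesis $|g_{s^*}(\omega_{s^*})|\ge 1$ is invoked only at the maximizing index on each side), and your case split on the sign of the bracketed factors before dividing by $c^*d^*>0$ is sound. What the paper's $\ell^1$ bookkeeping buys is the immediate upgrade to the $\varepsilon$-concentrated version (Theorem \ref{FSKPB}): there one sums the same pointwise inequality over the concentration set $M$ and invokes $\sum_{j\in M}|f_j(x)|\ge(1-\varepsilon)\|\theta_f x\|_1$, a step with no obvious analogue in your max-coefficient formulation.
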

\begin{proof}
	Let $1\leq j \leq n$. Then using Equation (\ref{ASSU})
	\begin{align*}
		|f_j(x)|&=	|f_j(\theta_\tau\theta_f x)|=\left|f_j\left(\sum_{r=1}^{n}f_r(x)\tau_r\right)\right|=\left|\sum_{r=1}^{n}f_r(x)f_j(\tau_r)\right|\\
		&=\left|f_j(x)f_j(\tau_j)+\sum_{r=1, r\neq j}^{n}f_r(x)f_j(\tau_r)\right|\geq \left|f_j(x)f_j(\tau_j)\right|-\left|\sum_{r=1, r\neq j}^{n}f_r(x)f_j(\tau_r)\right|\\
		&\geq \left|f_j(x)\right|-\left|\sum_{r=1, r\neq j}^{n}f_r(x)f_j(\tau_r)\right|\geq \left|f_j(x)\right|-\sum_{r=1, r\neq j}^{n}|f_r(x)f_j(\tau_r)|\\
		&\geq \left|f_j(x)\right|-\left(\sum_{r=1, r\neq j}^{n}|f_r(x)|\right)\max\limits_{1\leq j,r \leq n,j\neq r}|f_j(\tau_r)|\\
		&=\left|f_j(x)\right|-\left(\sum_{r=1}^{n}|f_r(x)|-|f_j(x)|\right)\max\limits_{1\leq j,r \leq n,j\neq r}|f_j(\tau_r)|\\
		&=\left|f_j(x)\right|-\left(\|\theta_fx\|_1-|f_j(x)|\right)\max\limits_{1\leq j,r \leq n,j\neq r}|f_j(\tau_r)|\\
		&=\left(1+\max\limits_{1\leq j,r \leq n,j\neq r}|f_j(\tau_r)|\right)\left|f_j(x)\right|-\|\theta_fx\|_1\max\limits_{1\leq j,r \leq n,j\neq r}|f_j(\tau_r)|.
	\end{align*}
On the other hand, again using Equation (\ref{ASSU})

\begin{align*}
	|f_j(x)|&=	|f_j(\theta_\omega\theta_g x)|=\left|f_j\left(\sum_{k=1}^{m}g_k(x)\omega_k\right)\right|=\left|\sum_{k=1}^{m}g_k(x)f_j(\omega_k)\right|\\
	&\leq \sum_{k=1}^{m}|g_k(x)f_j(\omega_k)|\leq \left(\sum_{k=1}^{m}|g_k(x)|\right)\displaystyle\max_{1\leq j \leq n, 1\leq k \leq m}|f_j(\omega_k)|\\
	&=\|\theta_gx\|_1\displaystyle\max_{1\leq j \leq n, 1\leq k \leq m}|f_j(\omega_k)|
\end{align*}
Therefore we have 
\begin{align}\label{I1}
\left(1+\max\limits_{1\leq j,r \leq n,j\neq r}|f_j(\tau_r)|\right)\left|f_j(x)\right|-\|\theta_fx\|_1\max\limits_{1\leq j,r \leq n,j\neq r}|f_j(\tau_r)|\leq \|\theta_g x \|_1\displaystyle\max_{1\leq j \leq n, 1\leq k \leq m}|f_j(\omega_k)|
\end{align}
Summing Inequality (\ref{I1})  on  the support of $\theta_f x$ we get 
\begin{align*}
&\left(1+\max\limits_{1\leq j,r \leq n,j\neq r}|f_j(\tau_r)|\right)\sum_{j \in \operatorname{supp}(\theta_fx)}\left|f_j(x)\right|-\|\theta_fx\|_1\left(\max\limits_{1\leq j,r \leq n,j\neq r}|f_j(\tau_r)|\right)\sum_{j \in \operatorname{supp}(\theta_fx)}1\leq \\
&\quad \|\theta_g x \|_1\left(\displaystyle\max_{1\leq j \leq n, 1\leq k \leq m}|f_j(\omega_k)|\right)\sum_{j \in \operatorname{supp}(\theta_fx)}1,
\end{align*}
i.e., 
\begin{align*}
	&\left(1+\max\limits_{1\leq j,r \leq n,j\neq r}|f_j(\tau_r)|\right)\|\theta_fx\|_1-\|\theta_fx\|_1\left(\max\limits_{1\leq j,r \leq n,j\neq r}|f_j(\tau_r)|\right)\|\theta_fx\|_0\leq \\
	&\quad \|\theta_g x \|_1\left(\displaystyle\max_{1\leq j \leq n, 1\leq k \leq m}|f_j(\omega_k)|\right)\|\theta_fx\|_0,
\end{align*}
i.e., 
\begin{align*}
\left[1-(\|\theta_f x\|_0-1)\max\limits_{1\leq j,r \leq n,j\neq r}|f_j(\tau_r)|\right]\|\theta_fx\|_1\leq 	\|\theta_g x \|_1\left(\displaystyle\max_{1\leq j \leq n, 1\leq k \leq m}|f_j(\omega_k)|\right)\|\theta_fx\|_0.
\end{align*}
Since the right side of previous inequality is non negative, we have
\begin{align}\label{PI}
	\left[1-(\|\theta_f x\|_0-1)\max\limits_{1\leq j,r \leq n,j\neq r}|f_j(\tau_r)|\right]^+\|\theta_fx\|_1\leq 	\|\theta_g x \|_1\left(\displaystyle\max_{1\leq j \leq n, 1\leq k \leq m}|f_j(\omega_k)|\right)\|\theta_fx\|_0.
\end{align}
Similarly
\begin{align}\label{P2}
\bigg[1-(\|\theta_g x\|_0-1)\max\limits_{1\leq k,s \leq m,k\neq s}|g_k(\omega_s)|\bigg]^+\|\theta_gx\|_1\leq \|\theta_f x \|_1	\left(\displaystyle\max_{1\leq j \leq n, 1\leq k \leq m}|g_k(\tau_j)|\right)\|\theta_gx\|_0.
\end{align}
Multiplying Inequalities (\ref{PI}) and (\ref{P2}) we get
\begin{align*}
&\left[1-(\|\theta_f x\|_0-1)\max\limits_{1\leq j,r \leq n,j\neq r}|f_j(\tau_r)|\right]^+\bigg[1-(\|\theta_g x\|_0-1)\max\limits_{1\leq k,s \leq m,k\neq s}|g_k(\omega_s)|\bigg]^+	\|\theta_fx\|_1 \|\theta_gx\|_1\leq \\
&\quad \|\theta_gx\|_1 \|\theta_fx\|_1\|\theta_fx\|_0 \|\theta_gx\|_0\left(\displaystyle\max_{1\leq j \leq n, 1\leq k \leq m}|f_j(\omega_k)|\right)\left(\displaystyle\max_{1\leq j \leq n, 1\leq k \leq m}|g_k(\tau_j)|\right).
\end{align*}
A cancellation of $\|\theta_f x \|_1\|\theta_g x \|_1$  gives the required inequality.
\end{proof}
Next we derive Banach space version of Theorem \ref{SKPB}.
\begin{theorem}\label{FSKPB}
(\textbf{Functional Studer-Kuppinger-Pope-B\"{o}lcskei  Uncertainty Principle})
Let $\{\tau_j\}_{j=1}^n$,  $\{\omega_k\}_{k=1}^m$ be two  collections in a   finite dimensional Banach  space $\mathcal{X}$ and $\{f_j\}_{j=1}^n$,  $\{g_k\}_{k=1}^m$ be two  collections in  $\mathcal{X}^*$ satisfying

\begin{align}\label{R}
	|f_j(\tau_j)|\geq 1,~ \forall 1\leq j \leq n, \quad |g_k(\omega_k)|\geq 1,~ \forall 1\leq k \leq m.
\end{align}
Let   $x \in \mathcal{X}\setminus \{0\}$ be  such that 
\begin{align}\label{R2}
	x=\theta_\tau\theta_f x=\theta_\omega\theta_g x.
\end{align}
If $\theta_fx$ is $\varepsilon$-concentrated on a subset $M\subseteq \{1, \dots, n\}$ w.r.t. 1-norm  and  $\theta_g x$ is $\delta$-concentrated on a subset $N\subseteq \{1, \dots, n\}$ w.r.t. 1-norm, then 
\begin{align}\label{R3}
o(M)o(N)\geq	\frac{\bigg[1-\varepsilon-(o(M)-1+\varepsilon)\max\limits_{1\leq j,r \leq n,j\neq r}|f_j(\tau_r)|\bigg]^+\bigg[1-\delta-(o(N)-1+\delta)\max\limits_{1\leq k,s \leq m,k\neq s}|g_k(\omega_s)|\bigg]^+}{\left(\displaystyle\max_{1\leq j \leq n, 1\leq k \leq m}|f_j(\omega_k)|\right)\left(\displaystyle\max_{1\leq j \leq n, 1\leq k \leq m}|g_k(\tau_j)|\right)}.
\end{align}		
\end{theorem}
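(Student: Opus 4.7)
The plan is to follow the same architecture as the proof of Theorem \ref{FKDBB}, replacing the summation over the support of $\theta_f x$ with a summation over the subset $M$ on which $\theta_f x$ is $\varepsilon$-concentrated (and symmetrically for $\theta_g x$ and $N$). The key observation is that Inequality (\ref{I1}) in the proof of Theorem \ref{FKDBB} is a pointwise estimate valid for every $1 \leq j \leq n$ that uses only the hypotheses (\ref{R}) and (\ref{R2}); it is independent of what subset we later sum over. So this inequality is available as a black box.

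First I would rederive (or simply cite) Inequality (\ref{I1}) using only (\ref{R}) and (\ref{R2}). Then, instead of summing over $\operatorname{supp}(\theta_f x)$, I would sum over $j \in M$. On the left side this produces
\begin{align*}
\left(1+\max\limits_{1\leq j,r \leq n,j\neq r}|f_j(\tau_r)|\right)\sum_{j \in M}|f_j(x)| - \|\theta_f x\|_1 \left(\max\limits_{1\leq j,r \leq n,j\neq r}|f_j(\tau_r)|\right) o(M),
\end{align*}
while the right side becomes $\|\theta_g x\|_1 \left(\max_{j,k}|f_j(\omega_k)|\right) o(M)$. Now I would invoke the $\varepsilon$-concentration hypothesis $\sum_{j \in M}|f_j(x)| \geq (1-\varepsilon)\|\theta_f x\|_1$ to lower-bound the first sum, and then rearrange. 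The algebra
\begin{align*}
(1+\mu)(1-\varepsilon) - \mu\, o(M) = 1 - \varepsilon - \mu\bigl(o(M) - 1 + \varepsilon\bigr),
\end{align*}
with $\mu = \max_{j \neq r}|f_j(\tau_r)|$, reveals exactly the factor appearing in (\ref{R3}). After taking positive parts (since the right-hand side is non-negative), I obtain the analogue of Inequality (\ref{PI}):
\begin{align*}
\Bigl[1 - \varepsilon - (o(M)-1+\varepsilon)\max\limits_{j \neq r}|f_j(\tau_r)|\Bigr]^+ \|\theta_f x\|_1 \leq \|\theta_g x\|_1 \left(\max_{j,k}|f_j(\omega_k)|\right) o(M).
\end{align*}

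By symmetry (swapping the roles of $(\tau,f,M,\varepsilon)$ and $(\omega,g,N,\delta)$) the analogous bound for $\|\theta_g x\|_1$ follows. Multiplying the two inequalities and cancelling the common factor $\|\theta_f x\|_1 \|\theta_g x\|_1$ — which is legitimate because $x \neq 0$ forces both factors to be strictly positive in view of (\ref{R2}) and (\ref{R}) — yields (\ref{R3}).

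I do not anticipate a genuine obstacle: the argument is a clean parametric generalization of the proof of Theorem \ref{FKDBB}, with $\|\theta_f x\|_0$ playing the role of $o(M)$ in the special case $\varepsilon = \delta = 0$ (when $M$ and $N$ can be taken to be the supports). The only point to handle with a little care is checking that the algebraic identity above produces precisely the factor $o(M) - 1 + \varepsilon$, and confirming that the positive-part step is still valid; both are routine.
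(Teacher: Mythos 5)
Your proposal is correct and follows essentially the same route as the paper's own proof: cite the pointwise estimate (\ref{I1}), sum over $M$ rather than over $\operatorname{supp}(\theta_f x)$, invoke $\varepsilon$-concentration to lower-bound $\sum_{j\in M}|f_j(x)|$ by $(1-\varepsilon)\|\theta_f x\|_1$, rearrange via the identity $(1+\mu)(1-\varepsilon)-\mu\, o(M)=1-\varepsilon-\mu(o(M)-1+\varepsilon)$, take positive parts, and multiply the two symmetric inequalities before cancelling $\|\theta_f x\|_1\|\theta_g x\|_1$. Your explicit justification that the cancellation is legitimate (since $x\neq 0$ and (\ref{R2}) force $\theta_f x\neq 0$ and $\theta_g x\neq 0$) is a small point the paper leaves implicit.
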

\begin{proof}
We start by using Equation (\ref{I1}). Let $1\leq j \leq n$. Then 
\begin{align}\label{Il1}
	\left(1+\max\limits_{1\leq j,r \leq n,j\neq r}|f_j(\tau_r)|\right)\left|f_j(x)\right|-\|\theta_fx\|_1\max\limits_{1\leq j,r \leq n,j\neq r}|f_j(\tau_r)|\leq \|\theta_g x \|_1\displaystyle\max_{1\leq j \leq n, 1\leq k \leq m}|f_j(\omega_k)|
\end{align}
Summing Inequality (\ref{Il1})  on  the support of $M$ we get 
\begin{align*}
	&\left(1+\max\limits_{1\leq j,r \leq n,j\neq r}|f_j(\tau_r)|\right)\sum_{j \in M}\left|f_j(x)\right|-\|\theta_fx\|_1\left(\max\limits_{1\leq j,r \leq n,j\neq r}|f_j(\tau_r)|\right)\sum_{j \in M}1\leq \\
	&\quad \|\theta_g x \|_1\left(\displaystyle\max_{1\leq j \leq n, 1\leq k \leq m}|f_j(\omega_k)|\right)\sum_{j \in M}1,
\end{align*}
i.e., 
\begin{align}\label{12}
	&\left(1+\max\limits_{1\leq j,r \leq n,j\neq r}|f_j(\tau_r)|\right)\sum_{j \in M}\left|f_j(x)\right|-\|\theta_fx\|_1\left(\max\limits_{1\leq j,r \leq n,j\neq r}|f_j(\tau_r)|\right)o(M)\leq \nonumber\\
	&\quad \|\theta_g x \|_1\left(\displaystyle\max_{1\leq j \leq n, 1\leq k \leq m}|f_j(\omega_k)|\right)o(M).
\end{align}
Since $\theta_fx$ is $\varepsilon$-concentrated on $M$ we are given with 
\begin{align}\label{13}
	\sum_{j \in M}|f_j(x)|\geq (1-\varepsilon) \sum_{j=1}^{n}|f_j(x)|.
\end{align}
Using Inequality (\ref{13}) in Inequality (\ref{12}) we get
\begin{align*}
&\left(1+\max\limits_{1\leq j,r \leq n,j\neq r}|f_j(\tau_r)|\right)(1-\varepsilon) \sum_{j=1}^{n}|f_j(x)|-\|\theta_fx\|_1\left(\max\limits_{1\leq j,r \leq n,j\neq r}|f_j(\tau_r)|\right)o(M)	\leq \\
&\left(1+\max\limits_{1\leq j,r \leq n,j\neq r}|f_j(\tau_r)|\right)\sum_{j \in M}\left|f_j(x)\right|-\|\theta_fx\|_1\left(\max\limits_{1\leq j,r \leq n,j\neq r}|f_j(\tau_r)|\right)o(M)\leq  \\
&\|\theta_g x \|_1\left(\displaystyle\max_{1\leq j \leq n, 1\leq k \leq m}|f_j(\omega_k)|\right)o(M),	
\end{align*}
i.e., 

\begin{align*}
&\left(1+\max\limits_{1\leq j,r \leq n,j\neq r}|f_j(\tau_r)|\right)(1-\varepsilon) \|\theta_fx\|_1-\|\theta_fx\|_1\left(\max\limits_{1\leq j,r \leq n,j\neq r}|f_j(\tau_r)|\right)o(M)	\leq\\ &\|\theta_g x \|_1\left(\displaystyle\max_{1\leq j \leq n, 1\leq k \leq m}|f_j(\omega_k)|\right)o(M),
\end{align*}
i.e., 
\begin{align*}
	\left[1-\varepsilon-(o(M)-1+\varepsilon)\max\limits_{1\leq j,r \leq n,j\neq r}|f_j(\tau_r)|\right]\|\theta_fx\|_1\leq 	\|\theta_g x \|_1\left(\displaystyle\max_{1\leq j \leq n, 1\leq k \leq m}|f_j(\omega_k)|\right)o(M).
\end{align*}
Since the right side of previous inequality is non negative, we have
\begin{align}\label{PI1}
	\left[1-\varepsilon-(o(M)-1+\varepsilon)\max\limits_{1\leq j,r \leq n,j\neq r}|f_j(\tau_r)|\right]^+\|\theta_fx\|_1\leq 	\|\theta_g x \|_1\left(\displaystyle\max_{1\leq j \leq n, 1\leq k \leq m}|f_j(\omega_k)|\right)o(M).
\end{align}
Similarly
\begin{align}\label{P22}
	\bigg[1-\delta-(o(N)-1+\delta)\max\limits_{1\leq k,s \leq m,k\neq s}|g_k(\omega_s)|\bigg]^+\|\theta_gx\|_1\leq \|\theta_f x \|_1	\left(\displaystyle\max_{1\leq j \leq n, 1\leq k \leq m}|g_k(\tau_j)|\right)o(N).
\end{align}
Multiplying Inequalities (\ref{PI1}) and (\ref{P22}) we get
\begin{align*}
	&\left[1-\varepsilon-(o(M)-1+\varepsilon)\max\limits_{1\leq j,r \leq n,j\neq r}|f_j(\tau_r)|\right]^+\bigg[1-\delta-(o(N)-1+\delta)\max\limits_{1\leq k,s \leq m,k\neq s}|g_k(\omega_s)|\bigg]^+	\|\theta_fx\|_1 \|\theta_gx\|_1\leq \\
	&\quad \|\theta_gx\|_1 \|\theta_fx\|_1\|o(M)o(N)\left(\displaystyle\max_{1\leq j \leq n, 1\leq k \leq m}|f_j(\omega_k)|\right)\left(\displaystyle\max_{1\leq j \leq n, 1\leq k \leq m}|g_k(\tau_j)|\right).
\end{align*}
By canceling $\|\theta_f x \|_1\|\theta_g x \|_1$ we get the  inequality in the statement of theorem.
\end{proof}
Note that $\theta_fx $ (resp. $\theta_gx $) is 0-supported on $\operatorname{supp}(\theta_fx)$ (resp. $\operatorname{supp}(\theta_gx)$). Hence Theorem \ref{FKDBB} follows from Theorem \ref{FSKPB}.
\begin{corollary}
	Theorem  \ref{SKPB}  follows from Theorem \ref{FSKPB}.
\end{corollary}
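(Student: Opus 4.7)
The plan is to apply Riesz representation and invoke Theorem \ref{FSKPB} directly, with the translation being purely notational. Given the unit vectors $\{\tau_j\}_{j=1}^n$ and $\{\omega_k\}_{k=1}^m$ in the Hilbert space $\mathcal{H}$ of Theorem \ref{SKPB}, I would define functionals $f_j, g_k \in \mathcal{H}^*$ by $f_j(x) := \langle x, \tau_j \rangle$ and $g_k(x) := \langle x, \omega_k \rangle$. The unit norm assumption immediately yields $|f_j(\tau_j)| = \|\tau_j\|^2 = 1$ and $|g_k(\omega_k)| = 1$, so hypothesis \eqref{R} of Theorem \ref{FSKPB} is satisfied.

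Next I would reconcile the notational overloading of $\theta_\tau$ between the two settings: in Theorem \ref{SKPB}, $\theta_\tau : \mathcal{H} \to \mathbb{K}^n$ is the analysis map $h \mapsto (\langle h, \tau_j\rangle)_{j=1}^n$, whereas in Theorem \ref{FSKPB}, $\theta_\tau : \mathbb{K}^n \to \mathcal{X}$ is the synthesis map $(a_j) \mapsto \sum_j a_j \tau_j$. A short adjoint computation gives $\theta_\tau^* a = \sum_j a_j \tau_j$, so the Hilbert adjoint $\theta_\tau^*$ coincides with the Banach synthesis operator in the functional notation. Under this identification, the frame-like identity $h = \theta_\tau^*\theta_\tau h$ of Theorem \ref{SKPB} becomes exactly $x = \theta_\tau \theta_f x$ of Theorem \ref{FSKPB}, and analogously for the $\omega, g$ pair. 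Moreover $\theta_f h = \theta_\tau h$ as elements of $\mathbb{K}^n$, so the $\varepsilon$- and $\delta$-concentration conditions on $M$ and $N$, as well as the cardinalities $o(M), o(N)$, transfer verbatim.

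Finally I would rewrite the right-hand side of \eqref{R3}. By conjugate symmetry of the modulus, $|f_j(\tau_r)| = |\langle \tau_r, \tau_j\rangle| = |\langle \tau_j, \tau_r\rangle|$ and similarly $|g_k(\omega_s)| = |\langle \omega_k, \omega_s\rangle|$, so the two bracketed quantities in the numerator match those in Theorem \ref{SKPB}. Both cross-maxima in the denominator collapse to $\max_{j,k} |\langle \tau_j, \omega_k\rangle|$ (since $|f_j(\omega_k)| = |\langle\omega_k,\tau_j\rangle| = |\langle\tau_j,\omega_k\rangle|$ and $|g_k(\tau_j)| = |\langle\tau_j,\omega_k\rangle|$), whose product reproduces $\max |\langle \tau_j, \omega_k\rangle|^2$. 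Substituting these identifications into \eqref{R3} yields precisely the conclusion of Theorem \ref{SKPB}. I do not anticipate any substantive obstacle: the entire argument is bookkeeping against the clash between the two conventions for $\theta_\tau$, coupled with the standard Riesz identification; the only place requiring mild care is the direction of the inner-product conjugation, which affects no absolute values.
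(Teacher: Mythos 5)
Your proposal is correct and follows exactly the paper's route: the paper's own proof consists of nothing more than defining $f_j(h)=\langle h,\tau_j\rangle$ and $g_k(h)=\langle h,\omega_k\rangle$ and invoking Theorem \ref{FSKPB}. You have simply spelled out the bookkeeping (the unit-norm check, the identification of the Hilbert adjoint $\theta_\tau^*$ with the Banach synthesis map, and the collapse of the two cross-maxima to $\max_{j,k}|\langle\tau_j,\omega_k\rangle|^2$) that the paper leaves implicit, and all of it is accurate.
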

\begin{proof}
Given two collections $\{\tau_j\}_{j=1}^n$,  $\{\omega_k\}_{k=1}^m$  of unit vectors  in a   finite dimensional Hilbert space $\mathcal{H}$, by defining 
	\begin{align*}
		f_j:\mathcal{H} \ni h \mapsto \langle h, \tau_j \rangle \in \mathbb{K}; \quad \forall 1\leq j\leq n, \quad g_k:\mathcal{H} \ni h \mapsto \langle h, \omega_k \rangle \in \mathbb{K}, \quad \forall 1\leq k\leq m
	\end{align*}
	we get the result.
\end{proof}
Theorem  \ref{FSKPB}  brings the following question.
\begin{question}
	Given a Banach space $\mathcal{X}$  for which subsets $M,N\subseteq \mathbb{N}$ and pairs  $(\{f_j\}_{j=1}^n, \{\tau_j\}_{j=1}^n)$,  $(\{g_k\}_{k=1}^m, \{\omega_k\}_{k=1}^m)$ satisfying (\ref{R})  and (\ref{R2}) we have equality in Inequality (\ref{R3})?
\end{question}

\section{Infinite dimensional Functional Kuppinger-Durisi-B\"{o}lcskei   Uncertainty Principle}
In this section we derive infinite dimensional versions of Theorem \ref{FKDBB} and Theorem \ref{FSKPB}.  Unlike finite dimensions, we cannot start with arbitrary infinite collection of elements in a Banach space. Following restricted class of collection has to be used. 
\begin{definition}\cite{KRISHNAJOHNSON}
	Let $\mathcal{X}$ be a  Banach space, $\{\tau_j\}_{j=1}^\infty \subseteq \mathcal{X}$ and $\{f_j\}_{j=1}^\infty \subseteq \mathcal{X}^*$. The   pair $(\{f_j\}_{j=1}^\infty, \{\tau_j\}_{j=1}^\infty)$ is said to be a \textbf{1-approximate Bessel sequence} (1-ABS) for $\mathcal{X}$ if following conditions hold.
	\begin{enumerate}[\upshape(i)]
		\item The map 
		
		\begin{align*}
			&\theta_f: \mathcal{X} \ni x \mapsto \theta_fx \coloneqq \{f_j(x)\}_{j=1}^\infty \in \ell^1(\mathbb{N})
		\end{align*}
	is a well-defined bounded linear operator. 
		\item The map 
		\begin{align*}
			&\theta_\tau:\ell^1(\mathbb{N}) \ni \{a_j\}_{j=1}^\infty \mapsto \sum_{j=1}^{\infty}a_j\tau_j \in \mathcal{X}
		\end{align*}
		is a well-defined bounded linear operator. 
	\end{enumerate}
\end{definition}
\begin{theorem}
Let $(\{f_j\}_{j=1}^\infty, \{\tau_j\}_{j=1}^\infty)$ and   $(\{g_k\}_{k=1}^\infty, \{\omega_k\}_{k=1}^\infty )$ be two  1-ABS for a  Banach  space $\mathcal{X}$  satisfying
\begin{align*}
	|f_j(\tau_j)|\geq 1,~ \forall j\in \mathbb{N}, \quad |g_k(\omega_k)|\geq 1,~ \forall k \in \mathbb{N}.
\end{align*}
If  $x \in \mathcal{X}\setminus \{0\}$ is such that 
\begin{align*}
	x=\theta_\tau\theta_f x=\theta_\omega\theta_g x,
\end{align*}
then 
\begin{align*}
	\|\theta_fx\|_0\|\theta_gx\|_0\geq	\frac{\bigg[1-(\|\theta_fx\|_0-1)\sup\limits_{j,r \in \mathbb{N},j\neq r}|f_j(\tau_r)|\bigg]^+\bigg[1-(\|\theta_g x\|_0-1)\sup\limits_{k,s \in \mathbb{N},k\neq s}|g_k(\omega_s)|\bigg]^+}{\left(\displaystyle\sup_{j,  k \in \mathbb{N} }|f_j(\omega_k)|\right)\left(\displaystyle\sup_{j, k \in \mathbb{N}}|g_k(\tau_j)|\right)}.
\end{align*}		
\end{theorem}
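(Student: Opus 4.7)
The plan is to transcribe the proof of Theorem 2.1 essentially verbatim, replacing every $\max$ by $\sup$, and to add one convergence justification and one trivial case split that are absent in the finite dimensional argument.

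First, the 1-approximate Bessel property of $(\{f_j\},\{\tau_j\})$ gives $\theta_f x = \{f_r(x)\}_{r=1}^\infty \in \ell^1(\mathbb{N})$ and makes the series $\sum_{r=1}^\infty f_r(x)\tau_r$ convergent in $\mathcal{X}$ to $x = \theta_\tau\theta_f x$. Because $f_j\in \mathcal{X}^*$ is continuous, I can interchange it with this convergent series to obtain
\begin{align*}
f_j(x) = \sum_{r=1}^\infty f_r(x) f_j(\tau_r),
\end{align*}
where absolute convergence is ensured by $\sum_r |f_r(x)| < \infty$ and the uniform bound $|f_j(\tau_r)|\leq \|\theta_f\|\|\theta_\tau\|$ forced by the 1-ABS hypothesis. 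Isolating the $r=j$ term and using $|f_j(\tau_j)|\geq 1$ together with the triangle inequality and the estimate $|f_j(\tau_r)|\leq \sup_{j\neq r}|f_j(\tau_r)|$ reproduces line for line the chain of bounds in the Theorem 2.1 proof, and combining with the parallel expansion $x=\theta_\omega\theta_g x = \sum_{k=1}^\infty g_k(x)\omega_k$ applied under $f_j$ yields the infinite dimensional analog of inequality (I1), namely
\begin{align*}
\Bigl(1+\sup_{j\neq r}|f_j(\tau_r)|\Bigr)|f_j(x)| - \|\theta_f x\|_1 \sup_{j\neq r}|f_j(\tau_r)| \leq \|\theta_g x\|_1 \sup_{j,k}|f_j(\omega_k)|.
\end{align*}

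Next, I would split on the cardinality of $S := \operatorname{supp}(\theta_f x)$. If $S$ is finite, then $\sum_{j\in S}|f_j(x)| = \|\theta_f x\|_1$ and $\sum_{j\in S}1 = \|\theta_f x\|_0$, so summing the previous inequality over $S$ and taking the positive part produces the infinite dimensional analog of the one-sided estimate (PI) of Theorem 2.1. The symmetric argument, swapping the roles of $(f,\tau)$ and $(g,\omega)$, yields the analog of (P2). Multiplying the two and cancelling $\|\theta_f x\|_1\|\theta_g x\|_1$ (both strictly positive, since $x\neq 0$ together with $x=\theta_\tau\theta_f x=\theta_\omega\theta_g x$ forces $\theta_f x\neq 0$ and $\theta_g x\neq 0$) gives the claimed inequality. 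If instead $S$ is infinite, then $\|\theta_f x\|_0=\infty$ and the left hand side of the target inequality equals $+\infty$; the 1-ABS hypothesis again forces all four suprema on the right hand side to be finite, so the right hand side is either $0$ (when the relevant positive part vanishes) or a finite number, and the inequality holds trivially. The symmetric statement for $\operatorname{supp}(\theta_g x)$ is identical.

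The main obstacle I anticipate is purely notational bookkeeping: carrying the suprema consistently through the long chain of estimates and ensuring that the final cancellation of $\|\theta_f x\|_1\|\theta_g x\|_1$ remains legitimate when these are interpreted as extended real numbers. The interchange $f_j\bigl(\sum_r f_r(x)\tau_r\bigr) = \sum_r f_r(x) f_j(\tau_r)$ is automatic from continuity of $f_j$ and the absolute convergence just noted, and the infinite-support case is vacuous, so no deeper analytic input beyond the 1-ABS definition is required.
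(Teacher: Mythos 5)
Your proposal follows essentially the same route as the paper's own proof, which likewise reruns the finite-dimensional chain of estimates with $\max$ replaced by $\sup$ and then invokes ``a similar calculation as in Theorem \ref{FKDBB}.'' The two points you add beyond the paper's text --- justifying the interchange $f_j\bigl(\sum_r f_r(x)\tau_r\bigr)=\sum_r f_r(x)f_j(\tau_r)$ via continuity of $f_j$ and the 1-ABS convergence, and disposing of the case $o(\operatorname{supp}(\theta_f x))=\infty$ separately --- are correct and genuinely tighten an argument the paper leaves implicit.
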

\begin{proof}
	Let $j \in \mathbb{N}$. Then 
\begin{align*}
	|f_j(x)|&=	|f_j(\theta_\tau\theta_f x)|=\left|f_j\left(\sum_{r=1}^{\infty}f_r(x)\tau_r\right)\right|=\left|\sum_{r=1}^{\infty}f_r(x)f_j(\tau_r)\right|\\
	&=\left|f_j(x)f_j(\tau_j)+\sum_{r=1, r\neq j}^{\infty}f_r(x)f_j(\tau_r)\right|\geq \left|f_j(x)f_j(\tau_j)\right|-\left|\sum_{r=1, r\neq j}^{\infty}f_r(x)f_j(\tau_r)\right|\\
	&\geq \left|f_j(x)\right|-\left|\sum_{r=1, r\neq j}^{\infty}f_r(x)f_j(\tau_r)\right|\geq \left|f_j(x)\right|-\sum_{r=1, r\neq j}^{\infty}|f_r(x)f_j(\tau_r)|\\
	&\geq \left|f_j(x)\right|-\left(\sum_{r=1, r\neq j}^{\infty}|f_r(x)|\right)\sup\limits_{ j,r \in  \mathbb{N},j\neq r}|f_j(\tau_r)|\\
	&=\left|f_j(x)\right|-\left(\sum_{r=1}^{\infty}|f_r(x)|-|f_j(x)|\right)\sup\limits_{j,r \in \mathbb{N},j\neq r}|f_j(\tau_r)|\\
	&=\left|f_j(x)\right|-\left(\|\theta_fx\|_1-|f_j(x)|\right)\sup\limits_{j,r \in \mathbb{N},j\neq r}|f_j(\tau_r)|\\
	&=\left(1+\sup\limits_{1\leq j,r \leq n,j\neq r}|f_j(\tau_r)|\right)\left|f_j(x)\right|-\|\theta_fx\|_1\sup\limits_{j,r \in \mathbb{N},j\neq r}|f_j(\tau_r)|.
\end{align*}
We also find 

\begin{align*}
	|f_j(x)|&=	|f_j(\theta_\omega\theta_g x)|=\left|f_j\left(\sum_{k=1}^{\infty}g_k(x)\omega_k\right)\right|=\left|\sum_{k=1}^{\infty}g_k(x)f_j(\omega_k)\right|\\
	&\leq \sum_{k=1}^{\infty}|g_k(x)f_j(\omega_k)|\leq \left(\sum_{k=1}^{\infty}|g_k(x)|\right)\displaystyle\sup_{j, k\in \mathbb{N}}|f_j(\omega_k)|\\
	&=\|\theta_gx\|_1\displaystyle\sup_{j, k \in \mathbb{N}}|f_j(\omega_k)|.
\end{align*}
	Now by doing a similar type of calculation as in the proof of Theorem \ref{FKDBB} we get the result.
\end{proof}
We recall that a vector  $\{a_j\}_{j=1}^\infty \in \ell^1(\mathbb{N})$ is said to be $\varepsilon$-concentrated on a subset $M\subseteq \mathbb{N}$ w.r.t. 1-norm  if 
\begin{align*}
	\sum_{j \in M}|a_j|\geq (1-\varepsilon) \sum_{j=1}^{\infty}|a_j| \iff \varepsilon \sum_{j=1}^{\infty}|a_j|\geq \sum_{j \in M^c}|a_j|.
\end{align*}
It is a easy to see the following infinite dimensional version of Theorem \ref{FSKPB}.
\begin{theorem}
Let $(\{f_j\}_{j=1}^\infty, \{\tau_j\}_{j=1}^\infty)$ and   $(\{g_k\}_{k=1}^\infty, \{\omega_k\}_{k=1}^\infty )$ be two  1-ABS for a  Banach  space $\mathcal{X}$  satisfying
\begin{align*}
	|f_j(\tau_j)|\geq 1,~ \forall j\in \mathbb{N}, \quad |g_k(\omega_k)|\geq 1,~ \forall k \in \mathbb{N}.
\end{align*}
	Let   $x \in \mathcal{X}\setminus \{0\}$ be  such that 
	\begin{align*}
		x=\theta_\tau\theta_f x=\theta_\omega\theta_g x.
	\end{align*}
	If $\theta_fx$ is $\varepsilon$-concentrated on a subset $M\subseteq \mathbb{N}$ w.r.t. 1-norm  and  $\theta_g x$ is $\delta$-concentrated on a subset $N\subseteq \mathbb{N}$ w.r.t. 1-norm, then 
	\begin{align*}
		o(M)o(N)\geq	\frac{\bigg[1-\varepsilon-(o(M)-1+\varepsilon)\sup\limits_{j,r \in  \mathbb{N},j\neq r}|f_j(\tau_r)|\bigg]^+\bigg[1-\delta-(o(N)-1+\delta)\sup\limits_{k,s \in  \mathbb{N},k\neq s}|g_k(\omega_s)|\bigg]^+}{\left(\displaystyle\sup_{j, k \in  \mathbb{N} }|f_j(\omega_k)|\right)\left(\displaystyle\sup_{j, k \in \mathbb{N}}|g_k(\tau_j)|\right)}.
	\end{align*}		
\end{theorem}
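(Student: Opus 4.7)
The plan is to combine the arguments of the preceding infinite-dimensional uncertainty principle and of Theorem \ref{FSKPB}, essentially replacing finite maxima with suprema and the support set with the concentration set. The 1-ABS hypothesis guarantees that $\theta_f x, \theta_g x \in \ell^1(\mathbb{N})$, so all series below converge absolutely and rearrangements/term-by-term bounds are justified without further comment.

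First, repeat verbatim the two estimates from the proof of the preceding infinite-dimensional theorem. Using $x = \theta_\tau \theta_f x$, split off the diagonal term and apply the triangle inequality together with $|f_j(\tau_j)| \geq 1$ to get, for every $j \in \mathbb{N}$,
\begin{align*}
|f_j(x)| \geq \Bigl(1+\sup\limits_{j,r \in \mathbb{N}, j \neq r}|f_j(\tau_r)|\Bigr)|f_j(x)| - \|\theta_f x\|_1 \sup\limits_{j,r \in \mathbb{N}, j \neq r}|f_j(\tau_r)|.
\end{align*}
Using instead $x = \theta_\omega \theta_g x$ and the triangle inequality yields
\begin{align*}
|f_j(x)| \leq \|\theta_g x\|_1 \sup\limits_{j,k \in \mathbb{N}}|f_j(\omega_k)|.
\end{align*}
Chaining these gives the infinite-dimensional analogue of Inequality (\ref{Il1}).

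Next, imitate the key step of Theorem \ref{FSKPB}: sum this chained inequality over $j \in M$ rather than over the full support. Writing $\alpha := \sup_{j \neq r}|f_j(\tau_r)|$ and $\beta := \sup_{j,k}|f_j(\omega_k)|$ for brevity, this yields
\begin{align*}
(1+\alpha)\sum_{j \in M}|f_j(x)| - \|\theta_f x\|_1 \, \alpha \, o(M) \leq \|\theta_g x\|_1 \, \beta \, o(M).
\end{align*}
Now invoke the $\varepsilon$-concentration of $\theta_f x$ on $M$, namely $\sum_{j \in M}|f_j(x)| \geq (1-\varepsilon)\|\theta_f x\|_1$, and algebraically simplify $(1+\alpha)(1-\varepsilon) - \alpha \, o(M) = 1 - \varepsilon - \alpha(o(M) - 1 + \varepsilon)$ to obtain
\begin{align*}
\bigl[1-\varepsilon-(o(M)-1+\varepsilon)\alpha\bigr]\|\theta_f x\|_1 \leq \|\theta_g x\|_1 \, \beta \, o(M).
\end{align*}
Since the right-hand side is non-negative, replace the left-hand factor by its positive part. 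The symmetric argument in $g, \omega, \delta, N$ gives a corresponding inequality with roles swapped.

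Finally, multiply the two resulting inequalities and cancel the strictly positive factor $\|\theta_f x\|_1 \|\theta_g x\|_1$ (nonzero because $x \neq 0$ together with $x = \theta_\tau \theta_f x = \theta_\omega \theta_g x$ forces both $\theta_f x$ and $\theta_g x$ to be nonzero). The main subtlety, as opposed to genuine obstacle, is purely book-keeping: one must verify that the 1-ABS structure legitimizes the interchange $f_j(\sum_r f_r(x)\tau_r) = \sum_r f_r(x) f_j(\tau_r)$ (which follows from continuity of $f_j$ applied to the $\ell^1$-convergent series defining $\theta_\tau \theta_f x$), and likewise for the expansion involving $\theta_\omega \theta_g x$. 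Everything else is a faithful transcription of the finite-dimensional proof of Theorem \ref{FSKPB} with $\max$ replaced by $\sup$ and finite sums by absolutely convergent series.
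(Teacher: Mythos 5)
Your proposal is correct and is precisely the argument the paper intends: the paper itself gives no written proof for this theorem (it merely states that the infinite-dimensional version of Theorem \ref{FSKPB} is ``easy to see''), and what you write is the straightforward combination of the preceding infinite-dimensional theorem's pointwise estimates with the $\varepsilon$-concentration summation step of Theorem \ref{FSKPB}, with $\max$ replaced by $\sup$. The only cosmetic point you could add is that when $M$ or $N$ is infinite the inequality holds trivially, so one may assume both are finite before summing over them.
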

The techniques used in \cite{KRISHNA} have  been extended to derive continuous versions of uncertainty principles for Banach spaces using Lebesgue function spaces \cite{KRISHNA2}. However, it seems that the techniques used in this paper cannot be extended to get continuous versions of the results derived in this paper.

We end the paper with the following two interesting and important questions.
\begin{question}
	\begin{enumerate}[\upshape(i)]
		\item Can Theorem \ref{FSKPB} be improved using divisors of the dimension of the space (like Roy uncertainty principle \cite{ROY1, ROY2}, Murty-Whang  uncertainty principle \cite{MURTYWHANG}). In particular,  for  prime dimensional Banach spaces (like Tao uncertainty principle \cite{TAO})?
		\item What are the versions  Theorem \ref{FSKPB} and the results in  \cite{KRISHNA}  for vector spaces over finite fields (like Goldstein-Guralnick-Isaacs uncertainty principle \cite{GOLDSTEINGURALNICKISAACS}, Evra-Kowalski-Lubotzky uncertainty principle \cite{EVRAKOWALSKILUBOTZKY}, Borello-Willems-Zini  uncertainty principle \cite{BORELLOWILLEMSZINI}, Feng-Hollmann-Xiang uncertainty principle \cite{FENGHOLLMANNXIANG}, Garcia-Karaali-Katz uncertainty principle \cite{GARCIAKARAALIKATZ} and  Borello-Sol\'{e} uncertainty principle \cite{BORELLOSOLE})?
	\end{enumerate}
\end{question}

 \bibliographystyle{plain}
 \bibliography{reference.bib}

\end{document}